%Submitted on March 2 to Archiv   der Mathematik.

\documentclass[]{amsart}

 \textwidth=13.5cm
  \textheight=23cm
  \hoffset=-1cm
  \voffset=-1cm
\newtheorem{theorem}{Theorem}[section]

\newtheorem{remark}[theorem]{Remark}
\newtheorem{corollary}[theorem]{Corollary}

\newtheorem{proposition}[theorem]{Proposition}

\usepackage{graphicx,graphics}
\usepackage[all]{xy}
\newcounter{rmnum}

\numberwithin{equation}{section}
%%%%%%%%%%%%%%%%%%%%
% Roman list
%%%%%%%%%%%%%%%%%%%%

%%%%%%%%%%%%%%%%%%%%
% Alpha list
%%%%%%%%%%%%%%%%%%%%

%%%%%%%%%%%%%%%%%%%%
% Arabic
%%%%%%%%%%%%%%%%%%%%

\begin{document}

  \baselineskip=17pt

\title{Characterizing maximal compact subgroups}

\author{Sergey A. Antonyan\\[3pt]}

\address{Departamento de  Matem\'aticas,
Facultad de Ciencias, Universidad Nacional Aut\'onoma de M\'exico,
 04510 M\'exico Distrito Federal, Mexico.}
\email{antonyan@unam.mx}

\begin{abstract} We prove that for a compact subgroup $H$ of an almost connected   locally compact Hausdorff group $G$, the following properties are mutually  equivalent: (1)~ $H$ is a maximal compact subgroup of $G$,  (2)~$G/H$ is  contractible, (3) $G/H$ is homeomorphic to a Euclidean space, (4)~$G/H$ is  an AE for paracompact spaces, (5)~$G/H$ is  a $G$-AE for paracompact proper $G$-spaces having a paracompact orbit space.

\end{abstract}

\thanks {{\it 2010 Mathematics Subject Classification}. 22D05; 22F05;   54C55.}
\thanks{{\it  Key words and phrases}. Locally compact almost connected group; maximal compact subgroup; coset space; contractible space; $G$-AE}

\dedicatory{Dedicated to the memory of Professor E.\,G. Skljarenko}
\maketitle \markboth{SERGEY ANTONYAN}{CHARACTERIZING MAXIMAL COMPACT SUBGROUPS}

\section{Introduction}

Throughout all topological groups are assumed to satisfy the  Huasdorff axiom of separation.

In 1979 B.\,Hoffmann~ \cite{hoff:79}  proved that the only  contractible compact  group is the trivial one. In this note we generalize this result as follows:
\begin{theorem}\label{T1} If $G$ is a compact  group and the closed subgroup $H\subset G$ is such that the coset space $G/H$ is contractible, then $H=G$.
\end{theorem}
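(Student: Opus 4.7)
The plan is to reduce the problem to compact connected Lie groups via Peter--Weyl approximation, where a coset space is a closed smooth manifold and the classical fact that a contractible closed manifold is a single point becomes available; the conclusion is then lifted back to $G$ by a compactness argument over the shrinking kernels.

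\textbf{Reductions.} Since $G/H$ is contractible it is connected, so its continuous image in the totally disconnected profinite coset space $G/(HG_0) \cong (G/G_0)/(HG_0/G_0)$ is a singleton; hence $HG_0 = G$ and $G/H \cong G_0/(G_0 \cap H)$ as $G_0$-spaces. Replacing $(G,H)$ by $(G_0, G_0 \cap H)$, assume $G$ is connected. By Peter--Weyl, write $G = \varprojlim G_\alpha$ with $G_\alpha = G/N_\alpha$ a compact connected Lie group and $\bigcap_\alpha N_\alpha = \{e\}$, and set $H_\alpha := HN_\alpha/N_\alpha$, $M_\alpha := G_\alpha/H_\alpha$. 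Then $G/H = \varprojlim M_\alpha$, each transition map $p_{\alpha\beta}\colon M_\beta \to M_\alpha$ being a smooth fiber bundle with compact fibers.

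\textbf{Main step.} I claim each $M_\alpha$ is a single point. Indeed $M_\alpha$ is a compact connected smooth manifold, orientable as a homogeneous space of a compact connected Lie group, of some dimension $d_\alpha$. If $d_\alpha \geq 1$, the rational fundamental class $[M_\alpha] \in H^{d_\alpha}(M_\alpha;\mathbb{Q})$ is non-zero, and a Serre spectral sequence / integration-along-fiber argument for $p_{\alpha\beta}$ shows that $p_{\alpha\beta}^*[M_\alpha]$ remains non-zero in $H^{d_\alpha}(M_\beta;\mathbb{Q})$ for every $\beta \geq \alpha$. By continuity of \v{C}ech cohomology on inverse limits of compact Hausdorff spaces,
\[
\check{H}^{d_\alpha}(G/H;\mathbb{Q}) \;=\; \varinjlim_\beta H^{d_\alpha}(M_\beta;\mathbb{Q}),
\]
and contractibility of $G/H$ forces the left-hand side to vanish, contradicting the survival of $[M_\alpha]$ in the direct system. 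Hence $d_\alpha = 0$ and the connected space $M_\alpha$ is a point.

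\textbf{Conclusion and main obstacle.} Since $M_\alpha$ is a point for every $\alpha$, we have $HN_\alpha = G$ for every $\alpha$. Fixing $g \in G$, the non-empty closed sets $H \cap gN_\alpha$ form a filtered family in the compact space $G$ with intersection $H \cap \bigcap_\alpha gN_\alpha = H \cap \{g\}$, non-empty by compactness; so $g \in H$ and $H = G$. The principal obstacle is the persistence claim $p_{\alpha\beta}^*[M_\alpha] \neq 0$: one must verify this carefully, because the fibers of $p_{\alpha\beta}$ need not be connected, so the spectral sequence check either requires local-coefficient systems or a direct construction of a transfer map for the compact-coset bundle $p_{\alpha\beta}$. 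Once this technical point is handled, the rest of the argument is straightforward.
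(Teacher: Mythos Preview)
Your route is genuinely different from the paper's, and the obstacle you flag is a real gap, not a technicality. Two concrete problems. First, the orientability claim is false: $\mathbb{RP}^2 = SO(3)/O(2)$ is a non-orientable homogeneous space of a compact connected Lie group, so a rational fundamental class for $M_\alpha$ need not exist. Second, the persistence claim fails with the tools you outline. With $\mathbb{Z}_2$ coefficients (forced on you in the non-orientable case) the bonding map can kill the top class: take $G=SU(2)$, $H=\{e\}$, $N_\alpha=\{\pm I\}$, $N_\beta=\{e\}$; then $p_{\alpha\beta}\colon S^3\to\mathbb{RP}^3$ is the double cover and $p_{\alpha\beta}^*$ annihilates $H^3(\mathbb{RP}^3;\mathbb{Z}_2)$, since the generator is $a^3$ with $a\in H^1$ and $H^1(S^3;\mathbb{Z}_2)=0$. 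With $\mathbb{Q}$ coefficients you lose the top class entirely whenever $M_\alpha$ is non-orientable. The Becker--Gottlieb transfer you suggest only yields $\tau^*\!\circ p^*=\chi(F)\cdot\mathrm{id}$, and the fiber $F\cong N_\alpha/(N_\alpha\cap HN_\beta)$ can have $\chi(F)=0$ (any positive-dimensional torus, any odd-dimensional fiber). So neither coefficient choice nor the transfer closes the gap; your proof is incomplete as it stands.

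The paper avoids inverse-limit cohomology altogether. It first shows, using that $\pi\colon G\to G/H$ is a Hurewicz fibration (Sklyarenko), that a contractible coset space $G/H$ is relatively locally contractible. It then applies a theorem of Szente: an almost connected group acting effectively and transitively on a locally compact, locally contractible space must be Lie. Applied to $G/N$, where $N$ is the ineffectiveness kernel of the $G$-action on $G/H$, this gives that $G/N$ is a compact Lie group; the homeomorphism $G/H\cong (G/N)/(H/N)$ and the Lie-group case (a contractible closed manifold is a point, via $H_n(\,\cdot\,;\mathbb{Z}_2)$) then force $H/N=G/N$, hence $H=G$. No cohomology class has to be tracked through a directed system.
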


 Then we apply  this theorem  to give the following characterization of maximal compact subgroups:

\begin{theorem}\label{T2} Let $G$ be a  locally compact almost connected group. Then a compact  subgroup $H\subset G$ is maximal compact if and only if  the coset space $G/H$ is contractible.
\end{theorem}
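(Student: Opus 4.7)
The forward direction is a direct application of the Malcev--Iwasawa theorem: for a locally compact almost connected group $G$ with maximal compact subgroup $H$, the principal bundle $G\to G/H$ is trivial, so $G/H$ is homeomorphic to some Euclidean space $\R^n$, hence contractible.

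For the converse, the plan is to reduce to Theorem~\ref{T1}. Assume $G/H$ is contractible, and use the Malcev--Iwasawa theorem to pick a maximal compact subgroup $K$ containing $H$; such a $K$ exists precisely because $G$ is almost connected. The same theorem furnishes a homeomorphism $G\cong K\times\R^n$ that is equivariant with respect to the right translation action of $K$ (on the first factor). Passing to the orbit space of the right action of $H\subset K$ yields a homeomorphism
$$G/H\cong (K/H)\times\R^n.$$
The factor $K/H$ is a retract of this product via the inclusion $kH\mapsto (kH,0)$ and the projection onto the first coordinate, and retracts of contractible spaces are contractible; so $K/H$ is contractible.

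At this point $K$ is a compact group, $H$ is a closed subgroup of $K$, and $K/H$ is contractible, so Theorem~\ref{T1} applies and forces $H=K$. Since $K$ was chosen maximal compact in $G$, so is $H$, completing the proof.

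The only real obstacle is securing the $K$-equivariant product decomposition $G\cong K\times\R^n$ in the full generality of locally compact almost connected groups (rather than Lie groups). This is classical (Malcev--Iwasawa--Mostow, with Gleason--Yamabe approximation beyond the Lie case): in the Lie setting it comes from the Iwasawa decomposition $G=KAN$ with $AN\cong\R^n$, and the homeomorphism is equivariant under right $K$-translation, which is exactly what is needed to descend to the $H$-quotient.
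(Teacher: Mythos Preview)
Your argument is correct and is essentially the paper's own proof: pick a maximal compact $K\supset H$, use the right-$K$-equivariant structure decomposition $G\cong E\times K$ with $E\cong\R^n$ to exhibit $K/H$ as a retract of $G/H$, and then apply Theorem~\ref{T1}. For the equivariant decomposition in the general locally compact almost connected case the paper simply cites Hofmann--Terp~\cite{hofterp:94} (your mention of $G=KAN$ is specific to the semisimple Lie case, but the Malcev--Iwasawa theorem you invoke elsewhere is indeed the correct source).
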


Recall that a locally compact group $G$ is called {\it almost connected}, if the space of connected components of
 $G$ is compact. A compact subgroup $K$ of $G$ is called {\it maximal compact} if every compact subgroup of $G$ is
  conjugate to a subgroup of $K$. By a well-known Malcev-Iwasawa theorem (see \cite[Ch.\,H, Theorem~32.5] {stroppel} for a convenient statement of the result; the proof can be found in the original literature referenced therein)), every locally compact almost connected group  $G$ has a maximal compact subgroup $K$. In this case  the coset space $G/K$ is homeomorphic to a Euclidean space.
    It follows from Theorem~\ref{T2} that the converse is also true:

\begin{corollary}\label{C1} Let $G$ be a  locally compact almost connected group. Then a compact  subgroup $H\subset G$ is maximal compact if and only if  the coset space $G/H$ is homeomorphic to a Euclidean space.
\end{corollary}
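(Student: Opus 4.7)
The plan is to deduce Corollary~\ref{C1} as a direct consequence of Theorem~\ref{T2} together with the Malcev--Iwasawa theorem already invoked in the excerpt. One direction is essentially handed to us, and the other requires only the remark that any Euclidean space is contractible.

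For the forward direction, assume $H$ is maximal compact in the locally compact almost connected group $G$. The Malcev--Iwasawa theorem, as stated in the excerpt, asserts precisely that in this situation $G/H$ is homeomorphic to a Euclidean space. So there is nothing to prove here beyond citing that result.

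For the converse, suppose $G/H$ is homeomorphic to some $\mathbb{R}^n$. Since $\mathbb{R}^n$ is contractible (the straight-line homotopy $(x,t)\mapsto (1-t)x$ contracts it to a point), $G/H$ is contractible. Theorem~\ref{T2} then applies: under the standing hypothesis that $G$ is locally compact and almost connected and $H$ is compact, contractibility of $G/H$ forces $H$ to be maximal compact.

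Since both implications are immediate once Theorem~\ref{T2} and Malcev--Iwasawa are at hand, there is no genuine obstacle to overcome. The conceptual content of the corollary lies entirely in Theorem~\ref{T2}; the corollary itself is a clean repackaging that trades the abstract property of contractibility for the concrete topological model of Euclidean space.
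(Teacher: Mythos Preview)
Your proof is correct and matches the paper's reasoning exactly: the paper also derives the forward direction from Malcev--Iwasawa and the converse from Theorem~\ref{T2} via the contractibility of $\mathbb{R}^n$. There is no separate proof given in the paper for Corollary~\ref{C1} beyond the sentence ``It follows from Theorem~\ref{T2} that the converse is also true,'' which is precisely what you have spelled out.
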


Based on Theorem~\ref{T2}, we give one more characterization of maximal compact subgroups in terms of (equivariant) extension properties of coset spaces. Namely, we have the following:

\begin{theorem}\label{T3}
Let $H$  be a compact  subgroup of a locally compact almost connected  group  $G$. Then the following properties are  equivalent:
\begin{enumerate}
\item $H$ is a maximal compact subgroup,
\item $G/H$ is a  $G$-{\rm AE}$(\mathcal P)$,
\item $G/H$ is an {\rm AE}$(\mathcal P)$.
\end{enumerate}
 \end{theorem}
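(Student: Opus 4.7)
The plan is to verify the cycle of implications $(3)\Rightarrow(1)\Rightarrow(2)\Rightarrow(3)$, with Theorem~\ref{T2} serving as the bridge from topological extension properties back to the algebraic conclusion on $H$.

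For $(3)\Rightarrow(1)$: the coset space $G/H$ is paracompact, being locally compact and $\sigma$-compact, so $G/H\times[0,1]$ is paracompact. Applying the $\mathrm{AE}(\mathcal P)$ property to the closed inclusion $G/H\times\{0,1\}\hookrightarrow G/H\times[0,1]$ and the map $(x,0)\mapsto x$, $(x,1)\mapsto x_{0}$ for some base point $x_{0}$ produces a contraction of $G/H$. Theorem~\ref{T2} then forces $H$ to be maximal compact.

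For $(2)\Rightarrow(3)$, I would reduce ordinary extension to equivariant extension by a \emph{freeing trick}. Given a paracompact space $Z$, a closed $A\subseteq Z$, and a continuous $f\colon A\to G/H$, form $X:=Z\times G$ with the $G$-action $g\cdot(z,x):=(z,gx)$. Since $G$ is $\sigma$-compact (being almost connected locally compact) and acts properly on itself by left translation, $X$ is a paracompact proper $G$-space whose orbit space is $Z$, hence $X\in\mathcal P$. The formula $\widetilde{f}(a,x):=x\cdot f(a)$ defines a $G$-equivariant map on the closed invariant subset $A\times G$, and by $(2)$ it extends to a $G$-map $\widetilde{F}\colon X\to G/H$. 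The desired extension of $f$ is then $F(z):=\widetilde{F}(z,e)$.

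The heart of the argument is $(1)\Rightarrow(2)$. Malcev--Iwasawa supplies only a topological homeomorphism $G/H\cong\R^{n}$, which alone does not yield an equivariant extension property. My plan is to invoke the Palais slice theorem, which covers any $X\in\mathcal P$ by $G$-invariant tubes of the form $G\times_{K_{\alpha}}S_{\alpha}$ with $K_{\alpha}\subseteq G$ compact; maximality of $H$ means each such $K_{\alpha}$ is conjugate to a subgroup of $H$, so on each tube a $G$-equivariant map into $G/H$ corresponds to an ordinary continuous map of the slice $S_{\alpha}$ into $G/H\cong\R^{n}$, and local extensions across this Euclidean target are provided by the classical Dugundji theorem. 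The principal obstacle is \emph{globalising}: one must patch these local extensions into a single $G$-equivariant map, which calls for an equivariant Dugundji-type theorem obtained by averaging against an equivariant partition of unity on the paracompact orbit space $X/G$, in the affine $G$-structure that $G/H$ inherits from a Cartan-type decomposition. This equivariant averaging is the real technical step, and for it I would invoke Antonyan's earlier framework for absolute extensors in proper $G$-spaces rather than reprove it here.
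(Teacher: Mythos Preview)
Your proposal is correct and follows essentially the same route as the paper: the implications $(2)\Rightarrow(3)$ and $(3)\Rightarrow(1)$ are argued exactly as you do (freeing trick via $G\times X$, and extending a contraction off the ends of the cylinder, then invoking Theorem~\ref{T2}), while for $(1)\Rightarrow(2)$ the paper simply cites \cite[Proposition~6(4)]{ant:99}, which is precisely the ``Antonyan's earlier framework'' you propose to invoke after your heuristic slice/Dugundji sketch.
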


Here $G/H$ denotes the quotient $G$-space of left cosets $\{xH\mid x\in G\}$ endowed with the $G$-action defined by left translations.
The notions of a $G$-{\rm AE}$(\mathcal P)$ and an {\rm AE}$(\mathcal P)$ involved in this theorem, as well as its proof, are given in section~4.
\medskip

\section{Proof of Theorem~\ref{T1}}

The proof is based on several well-known important results. The first of them is just a special  case of   Theorem~\ref{T1} when $G$ is a compact Lie group:

\begin{proposition}\label{P:0}
Let $H$  be a closed   subgroup of a compact Lie  group  $G$ such that the coset space $G/H$ is contractible. Then $H=G$.
 \end{proposition}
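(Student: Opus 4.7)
First I would invoke a classical fact from Lie theory: whenever $G$ is a Lie group and $H \subset G$ a closed subgroup, the coset space $G/H$ carries a canonical smooth manifold structure for which the projection $G \to G/H$ is a submersion, with $\dim(G/H) = \dim G - \dim H$. Since $G$ is compact, $G/H$ is therefore a compact smooth manifold without boundary.

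The key step is then a standard fact from algebraic topology: every compact connected $n$-manifold $M$ without boundary admits a fundamental class modulo $2$, so that $H_n(M;{\Bbb Z}/2) \cong {\Bbb Z}/2$. This holds irrespective of orientability, which is crucial since $G/H$ need not be orientable (consider for instance $\mathrm{SO}(3)/\mathrm{O}(2)\cong{\Bbb R}P^{2}$). Because $G/H$ is contractible it is connected and has vanishing reduced homology in every positive degree; setting $n := \dim(G/H)$ and comparing the two statements forces $n=0$.

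A zero-dimensional compact manifold is a finite discrete space, and contractibility then reduces it to a single point. That point is the coset $H$ itself, so $G/H=\{H\}$, which means $G=H$.

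Regarding difficulty, there is really no serious obstacle here: the result is a direct consequence of two textbook facts, namely the smooth manifold structure on $G/H$ and the mod-$2$ top homology of a closed manifold. The only subtle point worth emphasizing is the necessity of ${\Bbb Z}/2$-coefficients: an argument based on integer coefficients, Euler characteristic, or an orientation class could fail when $G/H$ happens to be non-orientable or odd-dimensional, but the mod-$2$ fundamental class is always present and always nonzero in top degree for a closed connected manifold.
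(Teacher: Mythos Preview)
Your argument is correct and follows essentially the same route as the paper: both show that $G/H$ is a closed manifold and then use the nonvanishing of the top mod-$2$ homology $H_n(G/H;\mathbb{Z}_2)\cong\mathbb{Z}_2$ to force $n=0$, whence $G/H$ is a point. Your write-up is in fact slightly more careful, since you make explicit that contractibility gives connectedness (needed for the fundamental-class statement) and you spell out why the zero-dimensional case yields a singleton.
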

\begin{proof} The coset space  $G/H$ is a compact manifold. Assume that $\dim\, G/M=n\ge 1$. Then the $n$-th homology group $H_n(G/H, \Bbb Z_2)=\Bbb Z_2$ (see, e.g., the corollary of  \cite[Ch.~3, Theorem~3.26]{hatcher}) which, however, is impossible due to the contractibility of $G/H$.
Hence, it must be $n=0$, and then $G/H$ is a singleton, as required.
\end{proof}

\begin{remark} A close  argument was used by B. Hoffmann~ \cite{hoff:79} to prove the same  proposition for $H=\{e\}$, the trivial subgroup.

\end{remark}

The second fact we need  is the following  result of  J.~Szente~\cite[Theorem~4]{szente}:

\begin{proposition}\label{T:szente}
Let  $G$ be an almost connected group that acts effectively and transitively on a locally compact, locally contractible space. Then $G$ is a Lie group.
 \end{proposition}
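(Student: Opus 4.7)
The plan is to deduce that $G$ is a Lie group by combining the Yamabe--Gleason structure theorem for locally compact almost connected groups with the topological hypotheses on $X$. The overarching strategy is to produce an approximating compact normal subgroup $N$ of $G$ with $G/N$ a Lie group, and then show that $N$ must be trivial by a rigidity argument powered by the local contractibility of $X$.

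First I would use transitivity to identify $X$ with a coset space $G/H$, where $H$ is the isotropy subgroup at a chosen point $x_0\in X$; the local contractibility of $X$ then becomes a local contractibility statement for $G/H$ at the coset $eH$. Next, by Yamabe's approximation theorem, for every neighborhood $U$ of $e$ in $G$ there exists a compact normal subgroup $N\subseteq U$ such that $G/N$ is a Lie group. Choosing $U$ small enough, the compact orbit $Nx_0$ can be forced to sit inside a prescribed open neighborhood $V$ of $x_0$ in $X$ which is contractible in $X$.

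The technical heart, and the main obstacle, is to show that $N\subseteq H$, i.e.\ that $N$ fixes $x_0$. Once this is achieved, normality of $N$ together with transitivity of the $G$-action forces $N$ to act trivially on all of $X$, and effectiveness of the action then yields $N=\{e\}$, so that $G=G/N$ is a Lie group. To establish $N\subseteq H$, I would study the orbit projection $\pi\colon X\to X/N$; its base is $X/N\cong G/(HN)$, a homogeneous space of the Lie group $G/N$ and hence a topological manifold, while its fibers are the compact $N$-orbits. The goal is to show that each such fiber is a singleton. A natural attack is to combine the local contractibility of $X$ with a Vietoris--Begle type cohomological device, showing that a non-trivial compact $N$-orbit $N/(N\cap H)$ would carry non-trivial \v{C}ech cohomology as a compact homogeneous space of a non-trivial compact group, and hence cannot be accommodated inside an arbitrarily small contractible open subset of $X$.

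This last step is precisely where Szente's proof is delicate: it is where cohomological dimension theory, or the ANR-theoretic behavior of locally compact locally contractible spaces, must be brought in to rule out small non-trivial compact orbits. I expect the remainder of the argument, namely the Yamabe approximation, the reduction to a coset space, and the passage from ``$N$ fixes $x_0$'' to ``$G$ is a Lie group'', to be essentially formal once that rigidity obstruction is overcome.
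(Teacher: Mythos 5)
First, note that the paper does not prove this proposition at all: it is quoted as Theorem~4 of Szente's 1974 paper and used as a black box, so there is no internal proof to compare yours against. Judged on its own terms, your proposal follows the natural line of attack (Yamabe approximation of $G$ by compact normal subgroups $N$ with $G/N$ a Lie group, followed by a rigidity argument forcing $N=\{e\}$), but it contains a genuine gap exactly where you yourself flag one. The entire content of Szente's theorem is the step you defer: showing that a sufficiently small compact normal subgroup $N$ must fix the base point, i.e.\ that the fibers of $X\to X/N$ are singletons. You propose to rule out a non-trivial orbit $N/(N\cap H)$ by observing that it would carry non-trivial \v{C}ech cohomology while sitting inside an arbitrarily small open set that is contractible in $X$. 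That inference does not work as stated: a compact set with non-trivial cohomology can perfectly well lie inside a contractible open subset of a space (a circle inside a disk), and inside arbitrarily small such sets whenever $X$ has positive dimension, so mere containment in a contractible neighborhood yields no contradiction. To make the obstruction bite one needs additional structure --- local cross-sections of the orbit map $X\to X/N$ exhibiting the orbit as a neighborhood retract, or a Vietoris--Begle/dimension-theoretic comparison of $X$ with the manifold $X/N$, or the local connectedness and finite-dimensionality machinery in the Montgomery--Zippin tradition. None of this is supplied, and it is precisely the non-formal core of the theorem.

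A smaller point in your favor: granting $N\subseteq H$, your passage from ``$N$ fixes $x_0$'' to ``$G$ is a Lie group'' is sound (normality gives $N\subseteq gHg^{-1}$ for every $g$, so $N$ lies in the kernel of the action, which is trivial by effectiveness, whence $G\cong G/N$ is Lie). So the skeleton is correct, but the substance is missing; since the paper simply cites Szente for this result, the honest options are to do the same or to actually carry out the rigidity step, which is a serious piece of work rather than a formality.
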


Let $X$ be a space and  $a\in V\subset U\subset  X$. One says that $V$  is relatively contractible to  the point $a$ in $U$ provided that there exists a homotopy $F:V\times I\to U$ such that $F(v, 1)=v$, $F(v, 0)=a$ and $F(a, t)=a$ for all $v\in V$ and $t\in I=[0, 1]$. Such a contraction  is called relative with respect to   the point $a$.
A space $X$ is called relatively locally contractible if for any  point  $a\in X$ and any its neighborhood $U$,  there is a neighborhood $V$ of $a$ which  is relatively contractible to  $a$ in $U$.

The next result is perhaps the key tool in our argument:

\begin{proposition}\label{P:00}
Let $H$  be a compact   subgroup of a locally compact group  $G$ such that the coset space $G/H$ is contractible. Then $G/H$ is relatively locally contractible. \end{proposition}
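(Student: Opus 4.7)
The plan is to leverage the transitive $G$-action on $G/H$ to convert the given global contraction into one that fixes the basepoint $a_0 := eH$ throughout, and then to obtain the local version by a tube-lemma argument.

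First, I would reduce to a single basepoint. Since left translation by any element of $G$ is a homeomorphism of $G/H$, and these translations act transitively on points, it suffices to establish relative local contractibility at $a_0$: given any neighborhood $U$ of $a_0$, one seeks a smaller neighborhood $V$ and a homotopy $\Phi\colon V\times I\to U$ with $\Phi(v,0)=a_0$, $\Phi(v,1)=v$, and $\Phi(a_0,t)=a_0$ for all $v\in V$ and $t\in I$.

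Next, starting from a contraction $F\colon G/H\times I\to G/H$ with $F(\cdot,0)\equiv a_0$ and $F(\cdot,1)=\mathrm{id}$, the key auxiliary object is the loop $\gamma(t):=F(a_0,t)$ based at $a_0$. Since $H$ is a compact subgroup of the locally compact group $G$, the quotient map $\pi\colon G\to G/H$ is a locally trivial principal $H$-bundle (by the classical cross-section theorem for actions of compact groups), and in particular has the path-lifting property. I would lift $\gamma$ to a continuous path $\tilde\gamma\colon I\to G$ with $\tilde\gamma(0)=e$ and set $h:=\tilde\gamma(1)$; note that $h\in H$ because $\pi(h)=\gamma(1)=a_0$. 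Then define
\[
\Phi(x,t) := h\cdot\tilde\gamma(t)^{-1}\cdot F(x,t).
\]
Using that $h\in H$ fixes $a_0$, one checks directly that $\Phi(x,0)=h\cdot a_0=a_0$, $\Phi(x,1)=h\cdot h^{-1}\cdot x=x$, and $\Phi(a_0,t)=h\cdot\tilde\gamma(t)^{-1}\gamma(t)=h\cdot a_0=a_0$, so $\Phi$ is a relative contraction of the entire space $G/H$ at $a_0$.

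Finally, since $\Phi(\{a_0\}\times I)=\{a_0\}\subset U$, the tube lemma applied with the compact interval $I$ produces an open neighborhood $V\ni a_0$ with $\Phi(V\times I)\subset U$, and the restriction of $\Phi$ to $V\times I$ is the required homotopy. The principal obstacle in this plan is the path-lifting step: one needs $\pi\colon G\to G/H$ to enjoy the path-lifting property, which for compact $H$ in a locally compact $G$ rests on the local triviality of $\pi$. Once that classical fact is in hand, the rest of the argument reduces to the algebraic manipulation defining $\Phi$ and a single application of the tube lemma.
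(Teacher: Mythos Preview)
Your argument is correct and matches the paper's proof almost line for line: reduce to the basepoint $eH$, lift the track $t\mapsto F(eH,t)$ to a path $\tilde\gamma$ in $G$ starting at $e$, set $\Phi(x,t)=\tilde\gamma(1)\,\tilde\gamma(t)^{-1}F(x,t)$, and finish with the tube lemma. The only cosmetic difference is in justifying the lift: the paper invokes Sklyarenko's theorem that $\pi\colon G\to G/H$ is a Hurewicz fibration (and in fact lifts all of $F$, though only the path over $eH$ is used), whereas you appeal to local triviality of $\pi$---either route yields the needed path-lifting, and the rest is identical.
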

\begin{proof}  Due to homogeneity it suffices to prove that $G/H$ is relatively locally  contractible at the point $eH$, the coset  of the unit element $e\in G$.

Let $F:G/H\times I\to G/H$ be a contraction such that $F(x, 1)=x$ and $F(x, 0)=eH$ for all $x\in G/H$.
 Due to a result of E.~Sklyarenko~\cite[Theorem 15]{skl:64},  the natural projection $\pi:G\to G/H$ is a Hurewicz fibration, i.e., it has the homotopy lifting property for arbitrary spaces. In particular, in de following commutative square      diagram there exists a filler, i.e., a diagonal continuous map $\psi$ which gives rise of  two triangle commutative diagrams:

$$\xymatrix{
 G/H\times\{0\}\ar[r]^{\qquad c}\ar@{_(->}[d]_i & G\ar[d]^\pi\\
G/H\times I\ar[r]_{\quad F}\ar[ru]_{\psi} & G/H
}$$
where  $c$ is the constant map to the point $e\in G$ and $i: G/H\times\{0\} \hookrightarrow G/H\times I$ is the standard  inclusion map.

Consider  the path $\varphi:I\to G$  defined by $\varphi(t)=\psi(eH, t)$, $t\in I$.

 Then  $\varphi(0)=e$,  $\varphi(1)\in H$  and $\varphi(t)H=F(eH, t)$ \ or, equivalently, $\varphi(t)^{-1}F(eH, t)=eH$ for every $t\in I$.

Next we define a new contraction $\Phi:G/H\times I\to G/H$ according the formula:
$$\Phi(x, t)=\varphi(1)\varphi(t)^{-1}F(x, t).$$
Then, for everey $(x, t)\in G/H\times I$,  we have:
$$\Phi(x, 1)=\varphi(1)\varphi(1)^{-1}F(x, 1)=F(x, 1)=x,$$
$$\Phi(x, 0)=\varphi(1)\varphi(0)^{-1}F(x, 0)=\varphi(1)\varphi(0)^{-1}eH=eH,$$
$$\Phi(eH, t)=\varphi(1)\varphi(t)^{-1}F(eH, t)=\varphi(1)eH=eH,\quad\forall t\in I.$$

Thus, $\Phi$ is a relative  contraction of $G/H$ to its point $eH$.

Next, assume that $U$ is any neighborhood of $eH$ in $G/H$. Since $\Phi(eH, t)=eH\in U$ for all $t\in I$, using compactness of  the unit interval $I=[0, 1]$,  one can find a neighborhood $V$ of $eH$ such that $\Phi(x, t)\in U$ for every $x\in V$. Thus, $\Phi: V\times I\to U$ is a relative contraction to the point $eH\in G/H$, and hence,   $G/H$ is relatively locally contractible at the point $eH\in G/H$.
\end{proof}

\medskip

\noindent
{\it Proof of Theorem~\ref{T1}.}
Denote by $N$ the kernel of the $G$-action on  $G/H$, i.e.,
$$N=\{g\in G  \mid   gx=x  \ \text{for all} \ x\in  G/H \}.$$
 Evidently,  $N\subset H$, and the compact group $G/N$ acts effectively and transitively on  $G/H$.

 Further, by Proposition~\ref{P:00}, the coset space $G/H$ is locally contractible.
   Consequently, one can apply  Proposition~\ref{T:szente} according to which  $G/N$ is a Lie group.

   Thus,    $H/N$  is a compact subgroup of the compact Lie group   $G/N$, and due to the natural homeomorphism
   \begin{equation}\label{homeo}
G/H\cong\frac{G/N}{H/N}
\end{equation}
 we infer  that the quotient space $\frac{G/N}{H/N}$ is contractible. Then Proposition~\ref{P:0} implies  that
 $G/N=H/N$. Since $ N$ is a subgroup of $H$ the latter equality quickly  yields  that $G=H$, as required.
\qed

\medskip

\section{Proof of Theorem~\ref{T2}}

Since $G$ is locally compact and almost connected, it has a maximal compact subgroup, say, $K$. Then $H$ is conjugated to a subgroup of $K$;  assume, without loss of generality, that $H\subset K$.

Consider $G$ as a  $K$-space  endowed with the $K$-action given by $k*g=gk^{-1}$ for all $k\in K$ and $g\in G$. Then $K$ is a $K$-invariant subset of $G$. Moreover,  it is immediate from the structure theorem for locally compact almost connected groups that  $K$ is a $K$-equivariant retract of $G$.
Indeed, according the structure theorem, there exists a  subset $E\subset G$ homeomorphic to some $\Bbb R^n$ such that the map  $(x, k)\mapsto xk: E\times K\to G$ is a homeomorphism. Every eleement $g\in G$, therefore, has a unique decomposition $g=x_gk_g$ with $x_g\in E$ and $k_g\in K$ (see \cite{hofterp:94}; for $G$ a Lie group see \cite[Ch.~XV, Theorem~3.1]{hoch:65}). Now the map $r:G\to K$ given by $r(g)=k_g$ is just the desired
  $K$-equivariant retraction.

  Further, since $H\subset K$ we see that $r$ is $H$-equivariant, and hence, it  induces a retraction $R:G/H\to K/H$. Next, since  by the hypothesis $G/H$ is contractible and a retraction preserves this property, we infer that $K/H$ is contractible. Now  Theorem~\ref{T1} implies  that $H=K$. But $K$ is a maximal compact subgroup of $G$, and hence,  $H$ is so,  as required.
  \qed

\medskip

\section{Proof of Theorem~\ref{T3}}

To start with, we  recall the  definition of a  proper action in the sense of R. Palais~\cite{pal:61}.

  A $G$-space   $X$ is called proper~ \cite[Definition 1.2.2]{pal:61}  if each point of $X$ has a, so called, {\it small} neighborhood, i.e., a neighborhood $V$ such that for every point of $X$ there is a neighborhood $U$ with the property that the set
$\langle U,V\rangle=\{g\in G \ | \  gU\cap V\not= \emptyset\}$    has compact closure in $G$.

Below we shall denote by  $G$-$\mathcal P$ the class of all paracompact proper $G$-spaces $X$ that  have paracompact orbit space $X/G$. It is a long time standing  open problem whether the orbit space of any paracompact proper $G$-space is paracompact (see  \cite{anne:03}).

 \medskip

A $G$-space  $Y$ is called an equivariant absolute    extensor  for the class $G$-$\mathcal P$   (notation:   $Y\in G$-AE($\mathcal P$))   if  for any $X\in G$-$\mathcal P$, every   $G$-map $f:A\to Y$ defined on a  closed invariant subset $A\subset X$,  extends to a $G$-map  $\psi\colon X\to Y$.  Taking here $G$ the trivial group we arrive to the well-known definition of a (nonequivariant) AE$(\mathcal P)$.

\medskip

\noindent
{\it Proof of Theorem~\ref{T3}}. $(1)\Longrightarrow (2)$ is known from \cite[Proposition~6(4)]{ant:99}.

\smallskip

$(2)\Longrightarrow (3)$. Suppose that $X$ is a paracompact space,  $A$ a closed subset of $X$, and $f:A\to G/H$  a continuous map. Consider the $G$-space $G\times X$ endowed with the action of $G$ defined by the rule:
$h(g, x)=(hg, x)$ for all $(g, x)\in G\times X$ and $h\in G$. Then the map $F:G\times A\to G/H$, given by $F(g, a)= gf(a)$, is a  $G$-map.

Since $G$ is a proper $G$-space,  so is the product $G\times X$.
Since the orbit space of $G\times X$ is  homeomorphic to $X$, we conclude that $G\times X\in G$-$\mathcal P$.
  Hence, by the hypothesis, $F$ extends to a  $G$-map $\widetilde F: G\times X\to G/H$.

Next we define a map $\widetilde f:X\to G/H$ by putting $\widetilde f(x)=\widetilde F(e, x)$. Clearly, $\widetilde f$ \ is a continuous extension of $f$, as required.

\smallskip

$(3)\Longrightarrow (1)$. Due to  Theorem~\ref{T2}, it suffices to show that  $G/H$ is contractible. Denote by $A$  the closed subset $G/H\times\{0\}\cup \{eH\}\times I\cup G/H\times\{1\}$ of the product $G/H\times I$. Consider the continuous map $f:A\to G/H$ defined by the rule:
$$f (u,0)=u  \ \ \text{and} \ \ f(u, 1)=eH \ \ \text{if}\  u\in G/H,\quad \text{and}\quad f(eH, t)=eH \ \ \text{for all} \  t\in I.$$
It is well-know that a locally compact group is paracompact (see, e. g., \cite[Ch.\,3, Theorem~3.1.1]{arhtk}). Since the projection $G\to G/H$ is a closed map and, by a theorem of E.~Michael, paracompactness is an invariant of closed maps, the coset  space $G/H$ is also paracompact. Then the product  $G/H\times I$ is paracompact, and since by the hypothesis $G/H\in {\rm AE}(\mathcal P)$, the map $f$  extends  to a continuous map  $F:G/H\times I\to G/H$, which is the desired contraction of $G/H$ to its point $eH$.
\qed

\bibliographystyle{amsplain}

\end{document}